\theoremstyle{plain}
\newtheorem{theorem}{Theorem}
\newtheorem{lemma}{Lemma}
\newtheorem{corollary}{Corollary}
\theoremstyle{definition}
\newtheorem{example}{Example}
\newcommand{\vect}[1]{\mbox{\boldmath$#1$}}
\renewcommand{\geq}{\geqslant}
\renewcommand{\leq}{\leqslant}
\title{Cube-magic labelings of grids}
\author{Rachel Wulan Nirmalasari Wijaya} 
\author{Joe Ryan}
\author{Thomas Kalinowski}
\affil{University of Newcastle, Australia}
\date{\today}
\begin{document}

\maketitle

\begin{abstract}
  We show that the vertices and edges of a $d$-dimensional grid graph $G=(V,E)$ ($d\geqslant 2$) can
  be labeled with the integers from $\{1,\ldots,\lvert V\rvert\}$ and $\{1,\ldots,\lvert E\rvert\}$,
  respectively, in such a way that for every subgraph $H$ isomorphic to a $d$-cube the sum of all
  the labels of $H$ is the same. As a consequence, for every $d\geqslant 2$, every $d$-dimensional
  grid graph is $Q_d$-supermagic where $Q_d$ is the $d$-cube.
\end{abstract}

\section{Introduction}\label{sec:intro}

The graphs considered in this paper are finite, undirected and simple. For a graph $G$, we denote
its vertex set by $V(G)$ and its edge set by $E(G)$. A \emph{graph labeling}, as introduced
in~\cite{Rosa_1966_certainvaluationsvertices}, is an assignment of integers to the vertices or
edges, or both, subject to certain conditions. Over the years, a large variety of different types of
graph labelings have been studied, see~\cite{Gallian_2009_dynamicsurveygraph} for an extensive survey.

For a graph $H$, we say that a graph $G$ admits an \emph{$H$-covering} if every edge of $G$ belongs
to at least one subgraph of $G$ which is isomorphic to $H$. A graph $G=(V,E)$ which admits an
$H$-covering is called \emph{$H$-magic} if there exists a bijection $F: V\cup
E\to\{1,2,\ldots,\lvert V\rvert+\lvert E\rvert\}$ and a constant $c=c(F)$, which we call the
\emph{$H$-magic sum} of $F$, such that 
\[\sum_{v\in V(H')}F(v)+\sum_{e\in E(H')}F(e)=c\]
for every subgraph $H'\subseteq G$ with $H'\cong H$. If in addition $F(V)=\{1,\ldots,\lvert
V\rvert\}$ then we say that the graph $G$ is \emph{$H$-supermagic}. The case where $H$ is a single
edge was studied in~\cite{Enomoto.etal_1998_Superedgemagic}, and the general concept for arbitrary
graphs $H$ was introduced in~\cite{Gutierrez.Llado_2005_Magiccoverings}. Since then $H$-magic and
$H$-supermagic labelings have been studied for a variety of graphs $H$
(\cite{Kojima_2013_supermagiclabelingsCartesian,Llado.Moragas_2007_Cyclemagicgraphs,Maryati.etal_2008_Phsupermagiclabelings,Ngurah.etal_2010_H-supermagic,Wijaya2016}).

In this paper we show that for an integer $d\geqslant 2$, a $d$-dimensional grid graph $G$ is
$Q_d$-supermagic where $Q_d$ denotes the $d$-cube. For $d=2$, the $2$-cube is the same as a 4-cycle
$C_4$ and our result is a consequence of Theorem 1
in~\cite{Kojima_2013_supermagiclabelingsCartesian} which gives sufficient conditions for the
cartesian product of a graph and a path to be $C_4$-supermagic. 

The structure of the paper is as follows. In Section~\ref{sec:results} we fix some notation and
state our main result. Section~\ref{sec:proof} contains the proof which is by induction on the
dimension $d$, where the base case $d=2$ is contained in Section~\ref{sec:base} and the induction
step in Section~\ref{subsec:step}.

\section{Notation and main result}\label{sec:results}

For integers $k\leqslant\ell$ we denote the sets $\{1,2,\ldots,k\}$ and $\{k,k+1,\ldots,\ell\}$ by
$[k]$ and $[k,\ell]$, respectively. For integers $d\geqslant 2$ and $n_1\geq n_2\geq\cdots\geq
n_d\geqslant 2$, let $\textsc{Grid}(d_1,\ldots,n_d)$ denote the $n_1\times\cdots\times n_d$-grid graph, i.e.,
the cartesian product of $d$ paths of lengths $n_1$,\ldots, $n_d$. In other words, the vertex set of
$G(d_1,\ldots,n_d)$ is $V=[n_1]\times\cdots\times[n_d]$ and edge set
\begin{equation*}
E=\left\{\{\vect x,\vect y\}\ :\ \vect x,\,\vect y\in V\text{ and }\sum_{i=1}^d\lvert
x_i-y_i\rvert=1\right\}.
\end{equation*}
The graph $\textsc{Grid}(2,2,\ldots,2)$ is called the $d$-cube and will be denoted by $Q_d$.

To simplify the presentation of our proof we will label the vertices and the edges separately and
then combine the labelings to obtain the $Q_d$-supermagic labeling. A vertex labeling
$f:V\to\{1,2,\ldots,\lvert V\rvert\}$ for a graph $G=(V,E)$ is called $H$-magic if there exists a
constant $c=c(f)$, called the \emph{$H$-magic sum of $f$} such that 
\[\sum_{v\in V(H')}f(v)=c\]
for every subgraph $H'\subseteq G$ with $H'\cong H$. Similarly, an edge labeling
$g:E\to\{1,2,\ldots,\lvert E\rvert\}$ for a graph $G=(V,E)$ is called $H$-magic if there exists a
constant $c'=c'(g)$, called the \emph{$H$-magic sum of $f$} such that 
\[\sum_{e\in E(H')}f(e)=c'\]
for every subgraph $H'\subseteq G$ with $H'\cong H$. An $H$-magic vertex labeling $f$ and
an $H$-magic edge labeling $g$ with $H$-magic sums $c=c(f)$ and $c'=c'(g)$ can be combined to obtain an
$H$-supermagic labeling $F$ with $H$-supermagic sum $c+\lvert E(H)\rvert\lvert V(G)\rvert$ by
setting $F(v)=f(v)$ for all $v\in V(G)$ and $F(e)=g(e)+\lvert V(G)\rvert$ for all $e\in E(G)$.

\begin{theorem}\label{thm:grid_labelings}
Let $d\geqslant 2$ and $n_1\geq n_2\geq\cdots\geq n_d\geqslant 2$ be positive integers, and let
$G=\textsc{Grid}(n_1,\ldots,n_d)$. Then $G$ admits a $Q_d$-magic vertex labeling $f$ and a
$Q_d$-magic edge labeling $g$.
\end{theorem}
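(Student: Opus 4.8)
The plan is to prove the theorem by induction on the dimension $d$, exactly as the introduction announces, treating the vertex labeling and the edge labeling in parallel. For the base case $d=2$, the $Q_2$-subgraphs are the unit squares of an $n_1\times n_2$ grid, and a $Q_2$-magic vertex labeling amounts to numbering $[n_1]\times[n_2]$ so that every unit square has the same sum; I would look for a labeling of the form $f(i,j)=a\cdot\varphi(i)+b\cdot\psi(j)+\text{const}$ where $\varphi$ and $\psi$ are permutations with the ``boustrophedon'' (snake) structure, so that consecutive differences telescope and the four corners of each square sum to a constant. A similar additive ansatz handles the edges, where the horizontal edges in a column and the vertical edges in a row must be numbered so that opposite pairs of each square contribute a fixed amount; one splits $\{1,\dots,\lvert E\rvert\}$ into the horizontal-edge block and the vertical-edge block and labels each block by an analogous snake pattern. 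These are routine constructions, so I would just exhibit explicit formulas and verify the constant-sum property by a short calculation.

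For the induction step, suppose the result holds for dimension $d-1$, and write $G=\textsc{Grid}(n_1,\dots,n_d)$. The key structural observation is that $G$ is the cartesian product $G'\mathbin{\square}P_{n_d}$ where $G'=\textsc{Grid}(n_1,\dots,n_{d-1})$, so $G$ consists of $n_d$ disjoint ``layers'' $G'_1,\dots,G'_{n_d}$, each isomorphic to $G'$, together with the ``vertical'' edges joining corresponding vertices in consecutive layers. Every subgraph $H\cong Q_d$ of $G$ then decomposes canonically: it is spanned by a $Q_{d-1}$-subgraph $H'$ of some layer $G'_k$, its translate in the adjacent layer $G'_{k+1}$, and the $2^{d-1}$ vertical edges between them. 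The idea is to take a $Q_{d-1}$-magic vertex labeling $f'$ and edge labeling $g'$ of $G'$ from the induction hypothesis, replicate them across the layers with carefully chosen additive shifts, and then label the vertical edges so that every $Q_d$ gets a constant total.

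Concretely, for the vertex labeling I would set $f(\vect x, k) = f'(\vect x) + (k-1)\lvert V(G')\rvert$ so that layer $k$ uses the integers $[(k-1)\lvert V(G')\rvert+1,\ k\lvert V(G')\rvert]$; then the vertex sum of an $H\cong Q_d$ sitting on layers $k,k+1$ equals $2c_{\mathrm V}' + 2^{d-1}(2k-1)\lvert V(G')\rvert$, which depends on $k$, so I would absorb this offending $k$-dependence into the vertical-edge labels. The vertical edges between layers $k$ and $k+1$ form a copy of $V(G')$; I want to label them, across all $n_d-1$ interlayer slabs, with the top block of $\{1,\dots,\lvert E\rvert\}$ in such a way that (i) within each slab the $Q_{d-1}$-bundle of $2^{d-1}$ vertical edges has a constant sum $s_k$, and (ii) $s_k$ is an affine function of $k$ chosen precisely to cancel the term $2^{d-1}(2k-1)\lvert V(G')\rvert$ coming from the vertices — this is possible because $s_{k+1}-s_k$ can be set to any fixed value by shifting each slab's block by a constant, and the horizontal edges (the in-layer copies of $E(G')$, labeled via a layer-shifted version of $g'$) contribute $2c_{\mathrm E}'$ independently of $k$. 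I expect the main obstacle to be exactly this bookkeeping: one must simultaneously arrange that the vertical-edge labels form a permutation of a consecutive block of integers, that each interlayer slab carries a $Q_{d-1}$-magic labeling of its $V(G')$-indexed edge set (which is itself an instance we can get from a snake-type construction or, recursively, from a magic labeling of $G'$ viewed as a vertex labeling), and that the per-slab sums increase by just the right constant; checking that all three constraints are mutually compatible, and that the resulting edge labels on horizontal and vertical edges together use each of $1,\dots,\lvert E\rvert$ exactly once, is where the real care is needed, but there is enough freedom (one free additive constant per layer for vertices, one per slab for vertical edges, plus the choice of the base-case snake patterns) to make it work.
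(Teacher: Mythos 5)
There is a genuine gap, and it is structural rather than a matter of bookkeeping. The theorem asks for a $Q_d$-magic \emph{vertex} labeling and a $Q_d$-magic \emph{edge} labeling separately. Your vertex labeling $f(\vect x,k)=f'(\vect x)+(k-1)\lvert V(G')\rvert$ gives $Q_d$-sums $2c_{\mathrm V}'+2^{d-1}(2k-1)\lvert V(G')\rvert$, which depend on the layer $k$; you then plan to cancel this dependence with the vertical-edge labels. That can at best produce a combined labeling whose \emph{total} over each $Q_d$ is constant, i.e.\ it aims at the corollary, not at the theorem, and it also destroys the induction: at dimension $d$ you would no longer have the separately magic $f$ and $g$ that your own induction hypothesis requires at dimension $d-1$. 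The paper's key idea, which is exactly what your ansatz is missing, is to make each layer's contribution independent of $k$ by shifting in \emph{alternating directions}: a vertex $(x_1,\ldots,x_d)$ gets the shift $(x_d-1)\prod n_i$ or $(n_d-x_d)\prod n_i$ according to the parity of $x_1+\cdots+x_{d-1}$, so that in every $Q_{d-1}$-face exactly half the vertices get each shift and the face sum is $S+2^{d-2}(n_d-1)\prod n_i$ for \emph{every} layer; the same antithetic trick (keyed to the parity of the edge direction, or of a partial coordinate sum when $d$ is even) is applied to the in-layer edges, and the connecting edges reuse $\tilde f$ with a similar reflection. No cancellation between vertices and edges is needed, and consecutive blocks of labels can be used verbatim.

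Even judged only as a route to the supermagic corollary, your cancellation scheme does not close. With each slab's vertical edges labeled by a shifted copy of a magic labeling of $V(G')$, the per-slab bundle sum is $c_{\mathrm V}'+2^{d-1}o_k$ where $o_k$ is the slab offset, so cancelling $2^{d-1}(2k-1)\lvert V(G')\rvert$ forces $o_{k+1}-o_k=-2\lvert V(G')\rvert$; but each slab occupies a block of only $\lvert V(G')\rvert$ consecutive integers, so blocks spaced $2\lvert V(G')\rvert$ apart cannot tile a consecutive range without gaps that some other edges must fill, and you have not said how. Moreover, your claim that the in-layer (horizontal) edges contribute $2c_{\mathrm E}'$ independently of $k$ is unjustified under a uniform layer shift of $g'$: a $Q_d$ on layers $k,k+1$ would pick up $(d-1)2^{d-2}(2k-1)M$ from those shifts, reintroducing exactly the kind of $k$-dependence you set out to remove. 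Both problems disappear once the alternating-shift construction is used, which is why the paper's induction step goes through with consecutive label blocks and no interaction between the vertex and edge labelings.
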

Based on the observation about combining $H$-magic vertex and edge labelings we obtain the following corollary.
\begin{corollary}\label{cor:main_result}
 Let $d\geqslant 2$ and $n_1\geq n_2\geq\cdots\geq n_d\geqslant 2$ be positive integers. Then $\textsc{Grid}(n_1,\ldots,n_d)$ is $Q_d$-supermagic.
\end{corollary}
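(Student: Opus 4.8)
By the remark in Section~\ref{sec:results} on combining an $H$-magic vertex labeling with an $H$-magic edge labeling, Corollary~\ref{cor:main_result} follows at once from Theorem~\ref{thm:grid_labelings}, so the plan is to prove the theorem. I would argue by induction on the dimension $d$, producing the vertex labeling $f$ and the edge labeling $g$ separately. For the base case $d=2$ one has $Q_2=C_4$ and $G=\textsc{Grid}(n_1,n_2)$, the cartesian product of two paths; here I would write down a $C_4$-magic vertex labeling and a $C_4$-magic edge labeling explicitly, for instance by pairing small labels with large ones so that a suitable family of pairs of adjacent vertices (respectively of opposite edges of a unit square) each receives a fixed sum, and then checking that every unit square gets a constant vertex sum and a constant edge sum. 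Combining the two recovers the fact that $\textsc{Grid}(n_1,n_2)$ is $C_4$-supermagic, which also follows from Theorem~1 in~\cite{Kojima_2013_supermagiclabelingsCartesian}.

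For the inductive step let $d\geqslant 3$, assume the theorem in dimension $d-1$, and set $G'=\textsc{Grid}(n_1,\ldots,n_{d-1})$ and $N=\lvert V(G')\rvert$. Then $G=\textsc{Grid}(n_1,\ldots,n_d)$ is obtained from $n_d$ vertex-disjoint copies of $G'$, which I will call layers and place on $V(G')\times\{k\}$ for $k\in[n_d]$, by adding for each $k\in[n_d-1]$ a perfect matching of vertical edges between layer $k$ and layer $k+1$. One first observes that the subgraphs of $G$ isomorphic to $Q_d$ are exactly the sets $V(p)\times\{k,k+1\}$, where $p$ is a subgraph of $G'$ isomorphic to $Q_{d-1}$ and $k\in[n_d-1]$; on the vertex side such a copy uses the vertices of $p$ in layers $k$ and $k+1$, and on the edge side it uses the edges of $p$ in layers $k$ and $k+1$ together with the $2^{d-1}$ vertical edges over the vertices of $p$ at level $k$. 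Writing $\sigma_k(p)$ for the sum of the labels of the vertices of $p$ lying in layer $k$, the vertex labeling $f$ is $Q_d$-magic precisely when $\sigma_k(p)+\sigma_{k+1}(p)$ is independent of $p$ and $k$, equivalently when $\sigma_k(p)$ is $2$-periodic in $k$, say $\sigma_{2j-1}(p)=\theta(p)$ and $\sigma_{2j}(p)=c-\theta(p)$ for some function $\theta$ and constant $c$.

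To construct $f$ I would use the inductive labeling $f'$ of $G'$ (with $Q_{d-1}$-magic sum $s$) as a template: partition $\{1,\ldots,n_dN\}$ into $N$ blocks of $n_d$ consecutive integers, give the block $\{n_d(q-1)+1,\ldots,n_dq\}$ to the vertex of $G'$ that carries the label $q$ under $f'$, and then decide, for each such block, which of its elements goes into which layer. Under this normalisation the condition on the $\sigma_k$ becomes a condition on auxiliary $[n_d]$-valued functions $\tau_1,\ldots,\tau_{n_d}$ on $V(G')$ which restrict to a permutation of $[n_d]$ at every vertex and whose $Q_{d-1}$-face sums alternate appropriately with $k$; such functions can be produced from the two-colouring of $G'$, with a case distinction on the parity of $n_d$, after which a direct computation shows that all $Q_d$-subgraphs of $G$ receive the same vertex sum. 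The edge labeling $g$ is built in the same spirit: the set $\{1,\ldots,\lvert E(G)\rvert\}$ is split between the horizontal edges, which form $n_d$ copies of $E(G')$ and are labelled $2$-periodically in $k$ using the inductive edge labeling $g'$ as a template, and the vertical edges, which form $n_d-1$ copies of $V(G')$ and are labelled by a $Q_{d-1}$-magic-type distribution over $V(G')$, the two pieces being tuned so that the horizontal contributions from layers $k$ and $k+1$ together with the vertical contribution at level $k$ add up to a constant.

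The step I expect to be the main obstacle is exactly this distribution of labels across the layers. The obvious choices do not work: if layer $k$ is given the block of consecutive integers $\{(k-1)N+1,\ldots,kN\}$, or the residue class $\{k,k+n_d,k+2n_d,\ldots\}$, then the average label in layer $k$ grows with $k$, so $\sigma_k(p)+\sigma_{k+1}(p)$ drifts with $k$ and cannot be constant. What is needed is a finer interleaving that keeps every two consecutive layers balanced, and making this work for all $n_d$ — in particular when $n_d$ is odd — seems to force a parity case analysis; the edge labeling adds further bookkeeping because of the two kinds of edges. Once the right distribution has been found, verifying that every $Q_d$-subgraph receives the same sum is routine, if somewhat lengthy.
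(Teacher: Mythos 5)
For the corollary itself your proposal coincides with the paper's proof: it is exactly the one-line combination of the $Q_d$-magic vertex and edge labelings of Theorem~\ref{thm:grid_labelings} via the remark in Section~\ref{sec:results} (shift every edge label by $\lvert V(G)\rvert$), so that part is correct and identical in approach.

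Your sketch of how you would prove Theorem~\ref{thm:grid_labelings} follows the same overall route as the paper (induction on $d$, viewing $\textsc{Grid}(n_1,\ldots,n_d)$ as $n_d$ layers of $\textsc{Grid}(n_1,\ldots,n_{d-1})$ joined by vertical matchings, and balancing consecutive layers using the bipartition of the grid), but the bookkeeping differs. You propose giving the vertex with template label $q$ the block of $n_d$ consecutive integers $\{n_d(q-1)+1,\ldots,n_dq\}$ and permuting within each column; the paper instead adds per-layer offsets that are multiples of $N=\prod_{i=1}^{d-1}n_i$, namely $(x_d-1)N$ or $(n_d-x_d)N$ according to the parity of $x_1+\cdots+x_{d-1}$, see~\eqref{eq:Dvertexlabel}, and analogously for the vertical edges in~\eqref{eq:ConnectingEdges}, where the vertex labeling $\tilde f$ is reused as the template. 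Two remarks on your anticipated obstacle: first, no case distinction on the parity of $n_d$ is needed in the induction step at all --- the paper's up/down progressions keyed to the two-colouring make each $Q_{d-1}$-face sum come out as $2^{d-2}\bigl((x_d-1)+(n_d-x_d)\bigr)N$ independently of $n_d$'s parity, and your own block scheme with $\tau_k(v)=k$ on one colour class and $\tau_k(v)=n_d+1-k$ on the other works equally well without any such case analysis (each face then even has a layer-wise constant sum $2^{d-2}(n_d+1)$ on top of the block part); the only parity split in the paper's induction step is on $d$ itself, for the horizontal edge labeling (\eqref{eq:EdgeOddCase} versus \eqref{eq:EdgeEvenCase1}--\eqref{eq:EdgeEvenCase2}), and the only $n$-parity indicator occurs in the base case $d=2$. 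Second, your sketch stops short of actually specifying the interleaving and the tuning of the horizontal/vertical edge ranges, so as a proof of the theorem it is a plan rather than a complete argument; but since the corollary may legitimately cite Theorem~\ref{thm:grid_labelings}, this does not affect the correctness of your derivation of the corollary.
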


\section{Proof of the main result}\label{sec:proof}
We proceed by induction on $d$. In Section~\ref{sec:base} we treat the base case $d=2$, and present
explicit vertex and edge labelings for grid graphs $\textsc{grid}(n_1,n_2)$. In
Section~\ref{subsec:step} we assume $d\geqslant 3$, and we describe how labelings $f$ and $g$ for
$\textsc{Grid}(n_1,\ldots,n_d)$ can be constructed from the labelings $\tilde f$ and $\tilde g$ for $\textsc{Grid}(n_1,\ldots,n_{d-1})$.
\subsection{The base case $d=2$}\label{sec:base}
In order to describe the labeling in a compact way we use $[P]$ to denote the indicator function for
a statement $P$, i.e.,
\[ [P] =	
\begin{cases}
    1	&	\text{if $P$ is true,}\\
    0	&	\text{if $P$ is false.}\\
  \end{cases}
\]

We define a vertex labeling $f:[n_1]\times[n_2]\to[n_1n_2]$ by
\begin{equation}\label{eq:vertices}
  f(i,j) =	
  \begin{cases}
    (i-1)n_2+j 											& \text{if $i$ is odd and $j$ is odd},\\
    (i-1)n_2+(n_2+1-j) 									& \text{if $i$ is even and $j$ is even},\\
    (n_1-i)n_2+j+[2\mid n_1\wedge2\nmid n_2] 			& \text{if $i$ is odd and $j$ is even},\\
    (n_1-i)n_2+(n_2+1-j+[2\mid n_1\wedge2\nmid n_2]) 	& \text{if $i$ is even and $j$ is odd},
  \end{cases}
\end{equation}
and an edge labeling $g:E\to[2n_1n_2-n_1-n_2]$ by
\begin{align}
  g((i,j),(i,j+1)) &=(i-1)(2n_2-1)+j 		&&\text{for }(i,j)\in[n_1]\times[n_2-1],\label{eq:hedges} \\
  g((i,j),(i+1,j)) &=(n_1-i)(2n_2-1)+1-j &&\text{for }(i,j)\in[n_1-1]\times[n_2].\label{eq:vedges}
\end{align}
\begin{example}\label{ex:2Dsupermagic}
The construction is illustrated in Figure~\ref{fig:5by3grid} for the graph $\textsc{Grid}(5,3)$.
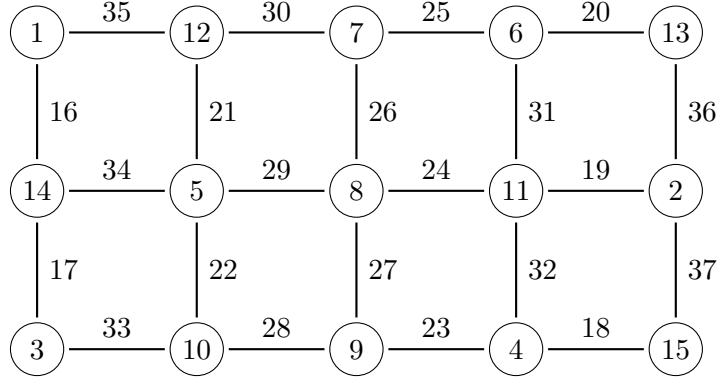
\begin{figure}[htb]
	\centering
	\begin{tikzpicture}[scale=.7,every node/.style={draw,shape=circle,outer sep=2pt,inner sep=1pt,minimum
			size=.7cm}]
		\foreach \j in {1,...,3}{
			\pgfmathtruncatemacro{\jk}{\j*2-1}
			\foreach \k in {1,...,2}{
				\pgfmathtruncatemacro{\kj}{\k*2-1}
				\pgfmathtruncatemacro{\label}{\kj+(\jk-1)*3}
				\node  (\jk\kj) at (3*\jk-3,9-3*\kj) {\label};
			}
			\pgfmathtruncatemacro{\label}{(5-\jk)*3+2}
			\node	(\jk2)	at	(3*\jk-3,3)	{\label};
		}
		
		\foreach \j in {1,...,2}{
			\pgfmathtruncatemacro{\jk}{\j*2}
			\foreach \k in {1,...,2}{
				\pgfmathtruncatemacro{\kj}{\k*2-1}
				\pgfmathtruncatemacro{\label}{(5-\jk)*3+4-\kj}
				\node  (\jk\kj) at (3*\jk-3,9-3*\kj) {\label};
			}
			\pgfmathtruncatemacro{\label}{\jk*3-1}
			\node	(\jk2)	at	(3*\jk-3,3)	{\label};
		}
		
		\foreach \k in {1,...,2}{
			\pgfmathtruncatemacro{\next}{1+\k}
			\foreach \j in {1,...,5}{
				\pgfmathtruncatemacro{\label}{(\j-1)*5+\k+15}
				\draw[thick] (\j\k) to node[draw=none,right=-2pt] {\label} 	(\j\next);
			}
		}
		
		\foreach \k in {1,...,3}{
			\foreach \j in {1,...,4}{
				\pgfmathtruncatemacro{\jnext}{\j+1}
				\pgfmathtruncatemacro{\label}{(5-\j)*5+16-\k}
				\draw[thick] (\j\k) to node[draw=none,above=-4pt,sloped] {\label} 	(\jnext\k);
			}
		}
	\end{tikzpicture}
	\caption{A $C_4$-supermagic labeling for $\textsc{Grid}(5,3)$. The vertex labels $f(i,j)$
          are given by~\eqref{eq:vertices}, and the edge labels are $g(e)+15$ where $g$ is defined
          by~\eqref{eq:hedges} and~\eqref{eq:vedges}.}\label{fig:5by3grid}
\end{figure}
  \end{example}

\begin{lemma}\label{lem:Vsum2d}
The function $f$ defined by~\eqref{eq:vertices} is a $Q_2$-magic vertex labeling for $\textsc{Grid}(n_1,n_2)$ with $Q_2$-magic sum 
\[c(f)=
\begin{cases}
  2(n_1n_2+1) &\text{if $n_1$ is odd or $n_2$ is even},\\
  2(n_1n_2+2) &\text{if $n_1$ is even and $n_2$ is odd}.
\end{cases}
\]
\end{lemma}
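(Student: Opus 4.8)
The statement packs two claims: that $f$ is a bijection $[n_1]\times[n_2]\to[n_1n_2]$, and that the sum of $f$ over the vertices of every $C_4$-subgraph equals the stated constant $c(f)$. My plan is to do the magic-sum part first, since it is short, and then deal with bijectivity.

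The plan starts by pinning down the $Q_2$-subgraphs. Since $n_1,n_2\geq 2$, the subgraphs of $\textsc{Grid}(n_1,n_2)$ isomorphic to $Q_2=C_4$ are exactly the unit squares $\{(i,j),(i+1,j),(i,j+1),(i+1,j+1)\}$ for $(i,j)\in[n_1-1]\times[n_2-1]$: every grid edge changes one coordinate by $\pm1$, so a $4$-cycle must contain an even number of edges in each of the two directions; the splits $(4,0)$ and $(0,4)$ are impossible for a simple $4$-cycle, so it uses two edges in each direction, and tracing it out shows its vertex set is such a square. Hence it suffices to show that
\[ S(i,j):=f(i,j)+f(i+1,j)+f(i,j+1)+f(i+1,j+1) \]
is independent of $(i,j)\in[n_1-1]\times[n_2-1]$.

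The key observation for this is that among the four corners of a unit square the four parity patterns $(\,i\bmod 2,\ j\bmod 2\,)$ each occur exactly once, so each of the four cases of~\eqref{eq:vertices} contributes precisely one summand to $S(i,j)$. Setting $\epsilon:=[2\mid n_1\wedge 2\nmid n_2]$ and substituting~\eqref{eq:vertices}, one splits on the parity of $i$ and of $j$ — four nearly identical computations. In each of them the coefficients of $n_2$ in the four summands add up to $2(n_1-1)$ and the remaining terms add up to $2n_2+2+2\epsilon$, so
\[ S(i,j)=2(n_1-1)n_2+2n_2+2+2\epsilon=2n_1n_2+2+2\epsilon . \]
Since $\epsilon=1$ exactly when $n_1$ is even and $n_2$ is odd, and $\epsilon=0$ otherwise (that is, when $n_1$ is odd or $n_2$ is even), this common value equals $2(n_1n_2+1)$ in the first regime and $2(n_1n_2+2)$ in the second, which is the asserted $c(f)$.

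Finally one must check that $f$ is a bijection; since $|[n_1]\times[n_2]|=n_1n_2=|[n_1n_2]|$, injectivity is enough. The idea is to invert~\eqref{eq:vertices} case by case: the vertices of row $i$ with $i,j$ both odd and those with $i,j$ both even land in the ``diagonal'' block $[(i-1)n_2+1,\,in_2]$, while those with $i,j$ of opposite parity land near the block indexed $n_1-i+1$, shifted by $\epsilon$; one then verifies that within each block the offsets produced by the relevant rows are pairwise distinct and that, over all blocks, they exhaust $[n_1n_2]$. I expect this bookkeeping to be the only delicate point: when $\epsilon=1$ one of the ``opposite parity'' values overflows its nominal block and must be accounted for in the neighbouring one, and the purpose of the correction term $\epsilon$ is precisely to keep this reshuffling collision-free in the case $n_1$ even, $n_2$ odd. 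Everything else is a direct computation.
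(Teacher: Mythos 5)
Your magic-sum argument is correct and is essentially the paper's own proof: the paper likewise uses that the four parity patterns of $(i,j)$ each occur exactly once among the corners of a unit square, expands~\eqref{eq:vertices} (writing out only one parity case explicitly), and arrives at $2(n_1n_2+1+[2\mid n_1\wedge 2\nmid n_2])$. The two points you add go beyond what the paper proves --- it neither justifies that the $Q_2$-subgraphs are exactly the unit squares nor verifies that $f$ is a bijection --- but be aware that your bijectivity part is only a sketch, so if you regard injectivity as part of the lemma (as Corollary~\ref{cor:main_result} ultimately requires), the block-by-block bookkeeping you outline, including the overflow case when $n_1$ is even and $n_2$ is odd, still needs to be carried out.
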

\begin{proof}
For every $(i,j)\in[n_1-1]\times[n_2-1]$ we have
\begin{align*}
f(i,j)&+f(i,j+1)+f(i+1,j)+f(i+1,j+1)\\
&=(i-1)n_2+j+(n_1-i)n_2+(j+1)+[2\mid n_1\wedge2\nmid n_2]+(n_1-(i+1))n_2\\
  &\qquad	+n_2+1-(j+1)+[2\mid n_1\wedge2\nmid n_2]+((i+1)-1)n_2+n_2+1-(j+1)\\
 &= (2n_1-2)n_2+2n+2+2[2\mid n_1\wedge2\nmid n_2]\\
&= 2(n_1n_2+1+[2\mid n_1\wedge2\nmid n_2]).\qedhere
\end{align*}
\end{proof}

\begin{lemma}\label{lem:Esum2d}
  The function $g$ defined by~\eqref{eq:hedges} and~\eqref{eq:vedges} is a $Q_2$-magic edge labeling for $\textsc{Grid}(n_1,n_2)$ with $Q_2$-magic sum 
$c(g)=(2n_1-1)(2n_2-1)+1$.
\end{lemma}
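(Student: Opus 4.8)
The plan is to mimic the proof of Lemma~\ref{lem:Vsum2d}: fix an arbitrary copy of $Q_2$ in $\textsc{Grid}(n_1,n_2)$, write out the labels of its four edges via~\eqref{eq:hedges} and~\eqref{eq:vedges}, and observe that the resulting sum does not depend on the chosen copy. Every subgraph isomorphic to $Q_2$ is the $4$-cycle through $(i,j)$, $(i,j+1)$, $(i+1,j)$, $(i+1,j+1)$ for some $(i,j)\in[n_1-1]\times[n_2-1]$, with horizontal edges $\{(i,j),(i,j+1)\}$ and $\{(i+1,j),(i+1,j+1)\}$ and vertical edges $\{(i,j),(i+1,j)\}$ and $\{(i,j+1),(i+1,j+1)\}$. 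Substituting, the two horizontal labels are $(i-1)(2n_2-1)+j$ and $i(2n_2-1)+j$, and the two vertical labels are $(n_1-i)(2n_2-1)+1-j$ and $(n_1-i)(2n_2-1)+1-(j+1)$. Adding the four quantities, the coefficient of $(2n_2-1)$ is $(i-1)+i+(n_1-i)+(n_1-i)=2n_1-1$, while the remaining terms sum to $j+j+(1-j)+(-j)=1$. Hence every copy of $Q_2$ has edge-label sum $(2n_1-1)(2n_2-1)+1$, which is the claimed value of $c(g)$.

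It remains to check that $g$ really is an edge labeling in the sense of Section~\ref{sec:results}, i.e., a bijection $E\to[\lvert E\rvert]=[\,2n_1n_2-n_1-n_2\,]$. I would do this by exhibiting the image as a disjoint union of intervals. For fixed $i\in[n_1]$ the horizontal edges in row $i$ receive the consecutive values $[(i-1)(2n_2-1)+1,\ (i-1)(2n_2-1)+n_2-1]$, and for fixed $i\in[n_1-1]$ the vertical edges between rows $i$ and $i+1$ receive, in decreasing order of $j$, the values $[(n_1-i-1)(2n_2-1)+n_2,\ (n_1-i)(2n_2-1)]$. Writing the horizontal blocks as $B_k^{\mathrm h}=[k(2n_2-1)+1,\ k(2n_2-1)+n_2-1]$ for $k=0,\dots,n_1-1$ and the vertical blocks as $B_k^{\mathrm v}=[k(2n_2-1)+n_2,\ (k+1)(2n_2-1)]$ for $k=0,\dots,n_1-2$, one sees that $B_k^{\mathrm h}$ and $B_k^{\mathrm v}$ are adjacent and together form $[k(2n_2-1)+1,(k+1)(2n_2-1)]$ (using $(n_2-1)+n_2=2n_2-1$), so all these blocks tile $[1,(n_1-1)(2n_2-1)+n_2-1]=[1,2n_1n_2-n_1-n_2]$ without overlap; since $g$ is injective on each block, it is a bijection.

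I do not expect any real obstacle: the statement is a direct computation. The only points requiring care are (i) using the second coordinate $j+1$, not $j$, in the label of the vertical edge $\{(i,j+1),(i+1,j+1)\}$ when forming the sum, and (ii) in the bijectivity step, checking that in each period of length $2n_2-1$ the length-$(n_2-1)$ horizontal block and the length-$n_2$ vertical block fit together exactly and that the final period (at $k=n_1-1$) contains only horizontal edges.
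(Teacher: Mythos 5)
Your computation of the four edge labels around an arbitrary $4$-cycle and their sum is exactly the paper's argument, and your arithmetic is correct. The additional verification that $g$ is a bijection onto $[2n_1n_2-n_1-n_2]$ (via the interleaved horizontal and vertical blocks of lengths $n_2-1$ and $n_2$) is a correct extra check that the paper leaves implicit, but the core of your proof coincides with the paper's.
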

\begin{proof}
For every $(i,j)\in[n_1-1]\times[n_2-1]$, we have
  \begin{align*}
    g((i,j)&,(i,j+1))  +g((i,j),(i+1,j))+g((i+1,j),(i+1,j+1))+g((i,j+1),(i+1,j+1))\\
   &=	(i-1)(2n_2-1)+j+(n_1-i)(2n_2-1)+1-j+((i+1)-1)(2n_2-1)+j\\
   &\quad	+(n_1-i)(2n_2-1)+1-(j+1)\\
   &= (2n_1-1)(2n_2-1)+1.\qedhere
  \end{align*}
\end{proof}

Combining these two labelings as described in Section~\ref{sec:results} we obtain a $Q_2$-supermagic
labeling $F$ with $Q_2$-supermagic sum
\[c(F)= 10n_1n_2-2n_1-2n_2 +
\begin{cases}
  4 &\text{if $n_1$ is odd or $n_2$ is even},\\
  6 &\text{if $n_1$ is even and $n_2$ is odd}.
\end{cases}
\]

\subsection{The induction step}\label{subsec:step}

We now assume $d\geqslant 3$. By induction, there exist $Q_{d-1}$-magic labelings $\tilde f$ and
$\tilde g$ with $Q_{d-1}$-magic sums $S=c(f)$ and $S'=c'(g)$ for $\textsc{Grid}(n_1,\ldots,n_{d-1})$. We define
$f:[n_1]\times\cdots\times[n_d]\to\left\{1,\ldots,\prod^{d}_{i=1}n_i\right\}$ by
\begin{equation}\label{eq:Dvertexlabel}
  f(x_1,\ldots,x_d)=	
\begin{cases}
  \tilde f(x_1,\ldots,x_{d-1})+(x_d-1)\prod\limits^{d-1}_{i=1}n_i & \text{if }x_1+\cdots+x_{d-1}\text{ is
    even},\\
  \tilde f(x_1,\ldots,x_{d-1})+(n_d-x_d)\prod\limits^{d-1}_{i=1}n_i & \text{if }x_1+\cdots+x_{d-1}\text{ is
    odd}.
\end{cases}
\end{equation}
\begin{example}
The vertex labeling~\eqref{eq:Dvertexlabel} is illustrated in Figure~\ref{fig:533grid_vertex} for
the graph $\textsc{Grid}(5,3,3)$ where $\tilde f$ is the vertex labeling for $\textsc{Grid}(5,3)$
presented in Example~\ref{ex:2Dsupermagic}. 
\begin{figure}[htb]
	\centering
	\begin{tikzpicture}[scale=1,every node/.style={draw,shape=circle,outer sep=2pt,inner sep=1pt,minimum
		size=.6cm}]
		\foreach \k in {1,...,3}{
			\foreach \j in {1,...,3}{
				\pgfmathtruncatemacro{\jk}{\j*2-1}
				\pgfmathtruncatemacro{\label}{(\k-1)*15+(\jk-1)*3+1}
				\node  (\jk1\k) at (3*\jk-3,9-3*\k) {\label};
				\pgfmathtruncatemacro{\label}{(3-\k)*15+(5-\jk)*3+2}
				\node  (\jk2\k) at (3*\jk-1.1,10.15-3*\k) {\label};
				\pgfmathtruncatemacro{\label}{(\k-1)*15+(\jk-1)*3+3}
				\node  (\jk3\k) at (3*\jk+0.8,11.3-3*\k) {\label};
			}
			
			\foreach \j in {1,...,2}{
				\pgfmathtruncatemacro{\jk}{\j*2}
				\pgfmathtruncatemacro{\label}{(3-\k)*15+(5-\jk)*3+3}
				\node  (\jk1\k) at (3*\jk-3,9-3*\k) {\label};
				\pgfmathtruncatemacro{\label}{(\k-1)*15+(\jk-1)*3+2}
				\node  (\jk2\k) at (3*\jk-1.1,10.15-3*\k) {\label};
				\pgfmathtruncatemacro{\label}{(3-\k)*15+(5-\jk)*3+1}
				\node  (\jk3\k) at (3*\jk+0.8,11.3-3*\k) {\label};
			}
		}
		
		\foreach \k in {1,...,2}{
			\pgfmathtruncatemacro{\next}{1+\k}
			\foreach \j in {1,...,5}{
				\draw[thick] (\j1\k) to node[draw=none,above=5pt,right=-6pt,sloped] {} 	(\j1\next);
				\draw[thick] (\j2\k) to node[draw=none,below=5pt,right=-10pt,sloped] {} 	(\j2\next);
				\draw[thick] (\j3\k) to node[draw=none,below=5pt,left=-5pt,sloped] {} 	(\j3\next);
			}
		}
		
		\foreach \k in {1,...,3}{
			\foreach \j in {1,...,4}{
				\pgfmathtruncatemacro{\jnext}{\j+1}
				\draw[thick] (\j1\k) to node[draw=none,above=4pt,right=10pt,sloped] {} 	(\jnext1\k);
				\draw[thick] (\j2\k) to node[draw=none,above=4pt,right=8pt,sloped] {} 	(\jnext2\k);
				\draw[thick] (\j3\k) to node[draw=none,above=4pt,right=1pt,sloped] {} 	(\jnext3\k);
			}
			\foreach \j in {1,...,5}{
				\draw[thick] (\j1\k) to node[draw=none,below=-4pt,sloped] {} 	(\j2\k);
				\draw[thick] (\j2\k) to node[draw=none,above=4pt,left=-3pt,sloped] {} 	(\j3\k);
			}
		}	
	\end{tikzpicture}
	\caption{A $Q_3$-magic vertex labeling with magic sum $c(f)=184$ for $\textsc{Grid}(5,3,3)$.}\label{fig:533grid_vertex}
\end{figure}
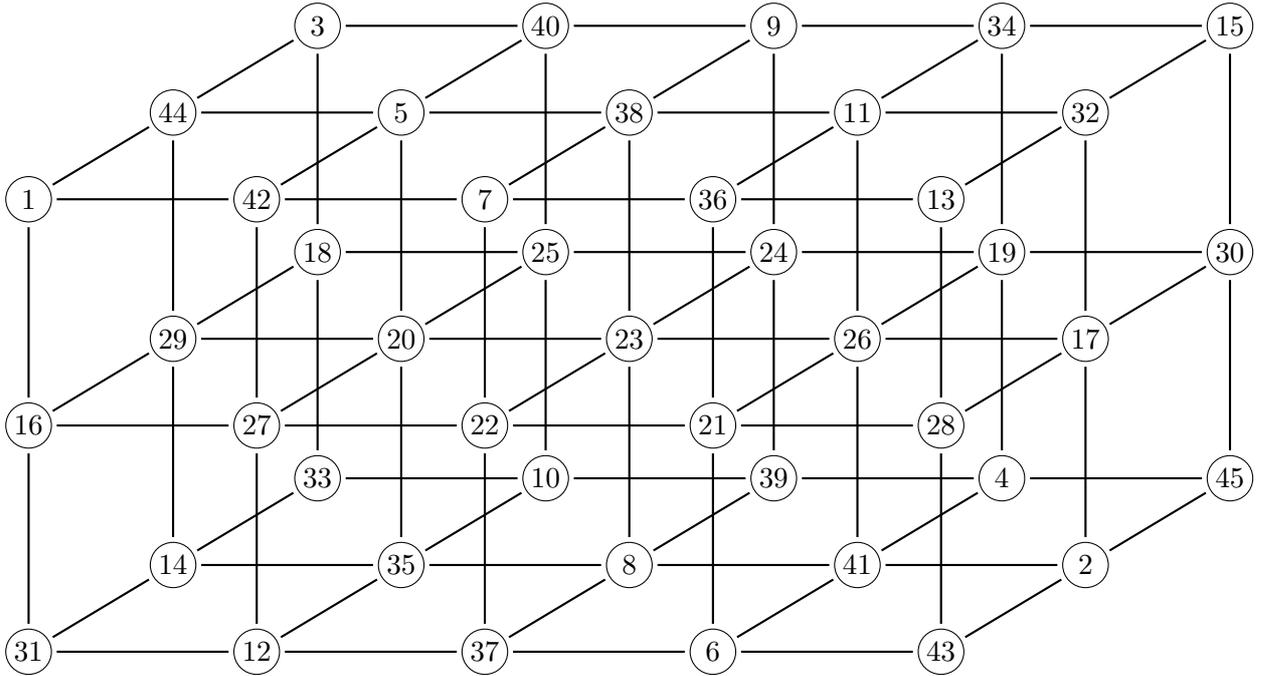
\end{example}

\begin{lemma}\label{lem:vertex_induction}
The function $f$ defined by~(\ref{eq:Dvertexlabel}) a $Q_d$-magic vertex labeling with $Q_d$-magic
sum 
\[c(f)=2S+2^{d-1}(n_d-1)\prod^{d-1}_{i=1}n_i.\]
\end{lemma}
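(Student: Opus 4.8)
The plan is to take an arbitrary subgraph $H'\subseteq G=\textsc{Grid}(n_1,\ldots,n_d)$ with $H'\cong Q_d$, enumerate its $2^d$ vertices, and show the sum of the labels $f$ is the stated constant. First I would observe that any copy of $Q_d$ in the grid is an axis-aligned unit cube: there are indices $1\leqslant a_1<a_2<\cdots<a_d\leqslant d$ (in fact all of $1,\ldots,d$, since we need $d$ directions and there are only $d$) so that $H'$ has vertex set $\{\vect x+\vect\varepsilon:\vect\varepsilon\in\{0,1\}^d\}$ for some base point $\vect x$ with $x_i\in[n_i-1]$. In particular $H'$ varies in the last coordinate, so it decomposes into two copies of $Q_{d-1}$: the ``bottom'' face with $x_d$ fixed and the ``top'' face with $x_d+1$ fixed, each of which is a copy of $Q_{d-1}$ inside the first $d-1$ coordinates sitting over the same base point $(x_1,\ldots,x_{d-1})$.

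The key structural point is how~\eqref{eq:Dvertexlabel} behaves on such a cube. Within each of the two faces, as $\vect\varepsilon$ ranges over $\{0,1\}^{d-1}$ in the first $d-1$ coordinates, the parity of $x_1+\varepsilon_1+\cdots+x_{d-1}+\varepsilon_{d-1}$ takes the value ``even'' on exactly $2^{d-2}$ of the $2^{d-1}$ choices and ``odd'' on the other $2^{d-2}$. Crucially, a vertex of the bottom face and the corresponding vertex of the top face (same first $d-1$ coordinates) have the \emph{same} parity of $x_1+\cdots+x_{d-1}$, so they both use the same branch of~\eqref{eq:Dvertexlabel}. Summing the pair over the two faces, the $\tilde f$-parts contribute $2\tilde f(\vect x+\vect\varepsilon|_{[d-1]})$, and the offsets contribute either $(x_d-1)P+(x_d+1-1)P=(2x_d-1)P$ (even case) or $(n_d-x_d)P+(n_d-x_d-1)P=(2(n_d-x_d)-1)P$ (odd case), where $P=\prod_{i=1}^{d-1}n_i$.

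Now I would sum over all $2^{d-1}$ choices of $\vect\varepsilon$ in the first $d-1$ coordinates. The $\tilde f$-part gives $2\sum_{\vect\varepsilon}\tilde f = 2S$ by the induction hypothesis (Lemma applied to $\textsc{Grid}(n_1,\ldots,n_{d-1})$, since the first $d-1$ coordinates of $H'$ form a copy of $Q_{d-1}$). For the offsets: among the $2^{d-1}$ terms, $2^{d-2}$ are of the ``even'' type contributing $(2x_d-1)P$ each and $2^{d-2}$ are of the ``odd'' type contributing $(2n_d-2x_d-1)P$ each, so the total offset is $2^{d-2}\bigl[(2x_d-1)+(2n_d-2x_d-1)\bigr]P = 2^{d-2}(2n_d-2)P = 2^{d-1}(n_d-1)P$. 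Adding, $\sum_{v\in V(H')}f(v)=2S+2^{d-1}(n_d-1)\prod_{i=1}^{d-1}n_i$, independent of the cube chosen, which is exactly $c(f)$. I would also remark that $f$ is a bijection onto $\{1,\ldots,\prod_{i=1}^d n_i\}$: for each fixed first $d-1$ coordinates the $n_d$ values of $x_d$ are sent (via the relevant branch) bijectively onto a block $\{(k-1)P+\tilde f(\vect x),\ldots\}$-shifted residue class, and as $\tilde f$ ranges over $[P]$ these blocks tile $[n_dP]$.

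The only mild subtlety — and the step I would be most careful about — is the parity-matching claim between the two faces and the counting ``exactly $2^{d-2}$ even, $2^{d-2}$ odd'' among the $2^{d-1}$ sign patterns in the first $d-1$ coordinates; this is where an off-by-one or a wrong parity bookkeeping would break the argument. Everything else is a direct substitution into~\eqref{eq:Dvertexlabel} together with the induction hypothesis, so I expect no real obstacle beyond keeping the parity case distinction straight.
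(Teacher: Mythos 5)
Your proposal is correct and follows essentially the same route as the paper: decompose the $Q_d$ into its two $(d-1)$-dimensional faces, use that exactly half of the $2^{d-1}$ sign patterns in the first $d-1$ coordinates give even parity, apply the induction hypothesis to the projected copy of $Q_{d-1}$ to get the $2S$ term, and let the offsets $(x_d-1)$ and $(n_d-x_d)$ combine to $(n_d-1)$. The only difference is bookkeeping — you pair vertices vertically and then sum over parity classes, while the paper shows each level-$i$ face already sums to $S+2^{d-2}(n_d-1)\prod_{i=1}^{d-1}n_i$ and doubles it — which amounts to the identical computation.
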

Before proving the lemma we illustrate the basic idea using the $Q_3$-subgraph of
$\textsc{Grid}(5,3,3)$ shown in Figure~\ref{fig:subcube_533}.
\begin{figure}[htb]
  \centering
  \begin{tikzpicture}[xscale=1.8,yscale=1.2,every node/.style={draw,shape=circle,outer sep=2pt,inner sep=1pt,minimum
		size=.6cm}]
  \node[label={west:{\small$(2,1,2)$}}]  (000) at (0,0) {27};
  \node[label={west,yshift=.2cm:{\small$(3,1,2)$}}]  (001) at (2,0) {22};
  \node[label={west,yshift=.2cm:{\small$(3,1,1)$}}]  (011) at (2,2) {7};
  \node[label={west:{\small$(2,1,1)$}}]  (010) at (0,2) {42};
  \node[label={east,yshift=.2cm:{\small$(2,2,2)$}}]  (100) at (.8,.7) {20};
  \node[label={east:{\small$(3,2,2)$}}]  (101) at (2.8,.7) {23};
  \node[label={east:{\small$(3,2,1)$}}]  (111) at (2.8,2.7) {38};
  \node[label={east,yshift=.2cm:{\small$(2,2,1)$}}]  (110) at (0.8,2.7) {5};
  \draw[thick] (000) -- (001) -- (011) -- (010) -- (000) -- (100) -- (101) -- (111) -- (110) --
  (100);
  \draw[thick] (010) -- (110);
  \draw[thick] (011) -- (111);
  \draw[thick] (001) -- (101);
  \end{tikzpicture}
  \caption{A vertex-labeled $Q_3$-subgraph of $\textsc{Grid}(5,3,3)$. The triples next to the
    vertices indicate their position in the grid.}\label{fig:subcube_533}
\end{figure}
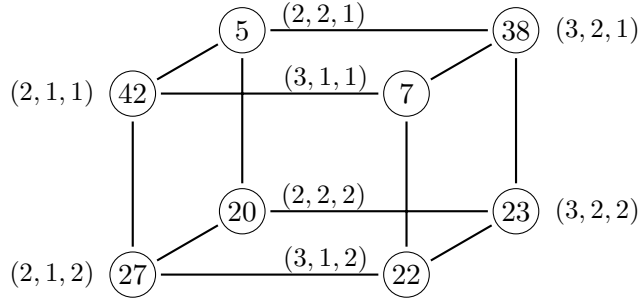
This subgraph corresponds to the square labeled 12--7--8--5 in Figure~\ref{fig:5by3grid}, and
by~\eqref{eq:Dvertexlabel} the vertex labels for the top and the bottom square of the cube in
Figure~\ref{fig:subcube_533} are
\begin{align*}
27&=12+15,& 22&=7+15,& 23&=8+15,& 20&=5+15\\
42&=12+2\times 15,& 7&=7,& 38&=8+2\times 15,& 5&=5
\end{align*}
and therefore the sum of the vertex labels is $2(12+7+8+5+4\times 15)=2S+8\times 15$, where $S$ is
the magic sum of the vertex labeling in Figure~\ref{fig:5by3grid}. 
\begin{proof}[Proof of Lemma~\ref{lem:vertex_induction}] 
Fix $i\in[n_d]$ and $(x_1,\ldots,x_{d-1})\in[n_1-1]\times\cdots\times[n_{d-1}-1]$. Let $H\cong Q_{d-1}$ be the subgraph of $\textsc{Grid}(n_1,\ldots,n_d)$
induced by 
\[V(H)=\left\{(x_1+\varepsilon_1,\ldots,x_{d-1}+\varepsilon_{d-1},i)\ :\ (\varepsilon_1,\ldots,\varepsilon_{d-1})\in\{0,1\}^{d-1}\right\}.\] 
Using the fact that exactly half of the $2^{d-1}$ vertices of $H$ have even coordinate sum, we
obtain from~(\ref{eq:Dvertexlabel}),
\begin{multline}\label{eq:face}
  \sum_{v\in V(H)} f(v)	= \sum_{\vect\varepsilon\in\{0,1\}^{d-1}} \tilde
  f(x_1+\varepsilon_1,\ldots,x_{d-1}+\varepsilon_{d-1})+2^{d-2}(i-1)\prod^{d-1}_{i=1}n_i+2^{d-2}(n_d-i)\prod^{d-1}_{i=1}n_i\\
  = S+2^{d-2}\prod^{d-1}_{i=1}n_i(i-1+n_d-i)= S+2^{d-2}(n_d-1)\prod^{d-1}_{i=1}n_i.
\end{multline}
A subgraph $H$ of $\textsc{Grid}(n_1,\ldots,n_d)$ is isomorphic to $Q_d$ if and only if its vertex set is 
\[V(H)=\{(x_1+\varepsilon_1,\ldots,x_{d-1}+\varepsilon_{d-1},x_d+\varepsilon_d)\ :\ 
(\varepsilon_1,\ldots,\varepsilon_{d-1},\varepsilon_d)\in\{0,1\}^{d}\}\]
for some $(x_1,\ldots,x_{d})\in[n_1-1]\times\cdots\times[n_{d}-1]$. Using~(\ref{eq:face}), this implies 
\[\sum_{v\in V(H)} f(v)	=2\left(S+2^{d-2}(n_d-1)\prod^{d-1}_{i=1}n_i\right).\qedhere\]
\end{proof}
We define an edge labeling $g:E\to\{1,\ldots,\lvert E\rvert\}$ for $G=\textsc{Grid}(n_1,\ldots,n_d)$
as follows. Let $N=\lvert V(\textsc{Grid}(n_1,\ldots,n_{d-1}))\rvert$ and $M=\lvert
E(\textsc{Grid}(n_1,\ldots,n_{d-1}))\rvert$. For an edge $e=\{\vect x,\vect y\}$ with $y_d=x_d+1$, we set
\begin{equation}\label{eq:ConnectingEdges}
  g(e)=	\tilde f(x_1,\ldots,x_{d-1})+n_dM+ \begin{cases}
    (x_d-1)N & \text{ if }x_1+\cdots+x_{d-1}\text{ is odd},\\
    (n_d-1-x_d)N & \text{ if }x_1+\cdots+x_{d-1}\text{ is even}.
  \end{cases}
\end{equation}
For the remaining edges we distinguish two cases.
\begin{description}
\item[Case 1.] $d$ is odd. For an edge $e=\{\vect x,\vect y\}$ with $y_i=x_i+1$, $i\leq d-1$, we set
  \begin{equation}\label{eq:EdgeOddCase}
    g(e)=	
      \tilde g(\{(x_1,\ldots,x_{d-1}),(y_1,\ldots,y_{d-1})\}) + 
\begin{cases}(x_d-1)M & \text{ if }i\text{ is odd},\\
  (n_d-x_d)M & \text{ if }i\text{ is even}.
    \end{cases}
  \end{equation}
\item[Case 2.] $d$ is even. For an edge $e=\{\vect x,\vect y\}$ with $y_i=x_i+1$, $i\leq d-2$, we
  set
  \begin{equation}\label{eq:EdgeEvenCase1}
    g(e)=	
      \tilde g(\{(x_1,\ldots,x_{d-1}),(y_1,\ldots,y_{d-1})\}) + 
\begin{cases}(x_d-1)M & \text{ if }i\text{ is odd},\\
    (n_d-x_d)M & \text{ if }i\text{ is even}.
\end{cases}
  \end{equation}	
  For an edge $e=\{\vect x,\vect y\}$ with $y_{d-1}=x_{d-1}+1$, we set
  \begin{equation}\label{eq:EdgeEvenCase2}
    g(e)=	
      \tilde g(\{(x_1,\ldots,x_{d-1}),(y_1,\ldots,y_{d-1})\}) + 
     \begin{cases} (x_d-1)M & \text{ if }\sum_{i=1}^{d-2}x_i\text{ is odd},\\
      (n_d-x_d)M & \text{ if }\sum_{i=1}^{d-2}x_i\text{ is even}.
    \end{cases}
  \end{equation}
\end{description}

\begin{example}\label{ex:3Dedgelabels}
The edge labeling given by~(\ref{eq:ConnectingEdges}) to~(\ref{eq:EdgeEvenCase2}) is illustrated in Figure~\ref{fig:533grid_edge} for
the graph $\textsc{Grid}(5,3,3)$. The underlying labelings for $\tilde f$ and $\tilde g$ for
$\textsc{Grid}(5,3)$ are the labelings from Example~\ref{ex:2Dsupermagic}. 
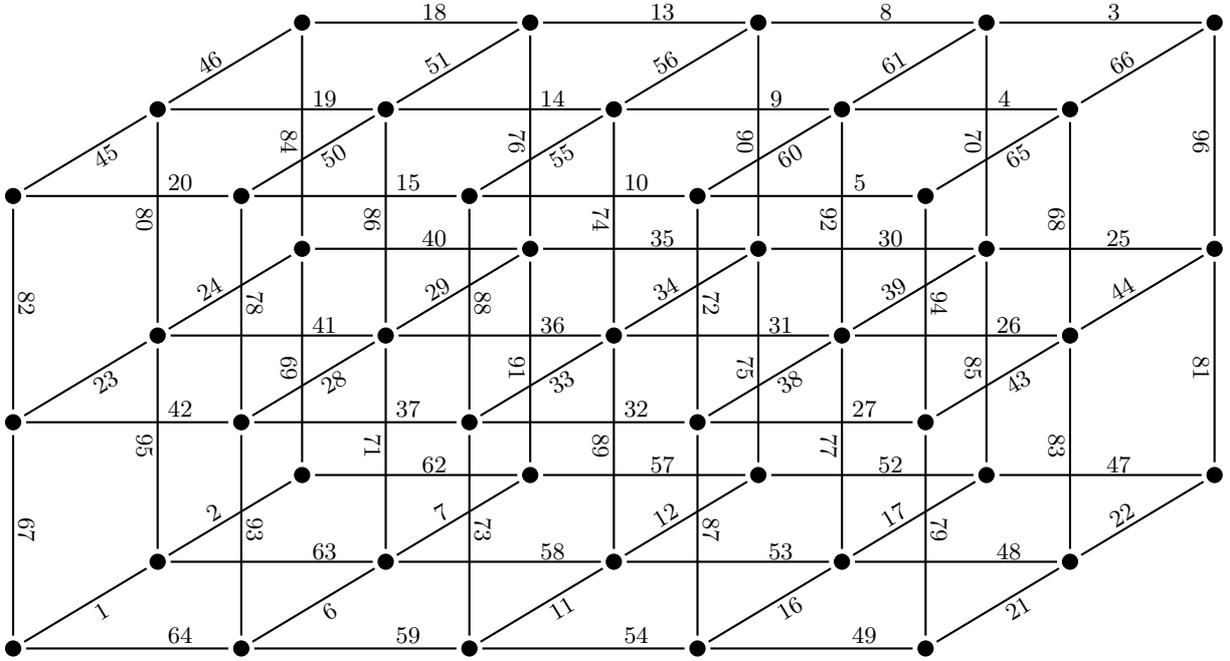
\begin{figure}[htb]
  \centering
  \begin{tikzpicture}[scale=1,every node/.style={draw,shape=circle,outer sep=2pt,inner
      sep=1pt,minimum size=.2cm}]
    \footnotesize 
\foreach \k in {1,...,3}{ 
\foreach \j in {1,...,5}{ 
\node[fill=black] (\j1\k) at (3*\j-3,9-3*\k) {}; 
\node[fill=black] (\j2\k) at (3*\j-1.1,10.15-3*\k) {}; 
\node[fill=black] (\j3\k) at (3*\j+0.8,11.3-3*\k) {}; } } 

\foreach \k in {1,...,2}{ 
\pgfmathtruncatemacro{\next}{1+\k} 
\foreach \j in {1,...,3}{
    \pgfmathtruncatemacro{\jk}{\j*2-1} 
    \pgfmathtruncatemacro{\label}{66+(2-\k)*15+(\jk-1)*3+1}
    \draw[thick] (\jk1\k) to node[draw=none,above=5pt,right=-11pt,sloped] {\label} (\jk1\next);
    \pgfmathtruncatemacro{\label}{66+(\k-1)*15+(5-\jk)*3+2} \draw[thick] (\jk2\k) to
    node[draw=none,below=5pt,right=-10pt,sloped] {\label} (\jk2\next);
    \pgfmathtruncatemacro{\label}{66+(2-\k)*15+(\jk-1)*3+3} \draw[thick] (\jk3\k) to
    node[draw=none,below=5pt,left=-11pt,sloped] {\label} (\jk3\next); }
			
   \foreach \j in {1,...,2}{ 
 \pgfmathtruncatemacro{\jk}{\j*2}
 \pgfmathtruncatemacro{\label}{66+(\k-1)*15+(5-\jk)*3+3} 
 \draw[thick] (\jk1\k) to node[draw=none,above=5pt,right=-11pt,sloped] {\label} (\jk1\next);
 \pgfmathtruncatemacro{\label}{66+(2-\k)*15+(\jk-1)*3+2} 
 \draw[thick] (\jk2\k) to node[draw=none,below=5pt,right=-10pt,sloped] {\label} (\jk2\next);
 \pgfmathtruncatemacro{\label}{66+(\k-1)*15+(5-\jk)*3+1} 
 \draw[thick] (\jk3\k) to node[draw=none,below=5pt,left=-11pt,sloped] {\label} (\jk3\next); } 
}
		
		\foreach \k in {1,...,3}{
		 	\foreach \j in {1,...,4}{
		 		\pgfmathtruncatemacro{\jnext}{\j+1}
		 		\pgfmathtruncatemacro{\label}{(\k-1)*22+(4-\j)*5+5}
		 		\draw[thick] (\j1\k) to node[draw=none,above=5pt,right=11pt,sloped] {\label} 	(\jnext1\k);
		 		\pgfmathtruncatemacro{\label}{(\k-1)*22+(4-\j)*5+4}
		 		\draw[thick] (\j2\k) to node[draw=none,above=4pt,right=11pt,sloped] {\label} 	(\jnext2\k);
		 		\pgfmathtruncatemacro{\label}{(\k-1)*22+(4-\j)*5+3}
		 		\draw[thick] (\j3\k) to node[draw=none,above=4pt,right=-2pt,sloped] {\label} 	(\jnext3\k);
		 	}
			\foreach \j in {1,...,5}{
				\pgfmathtruncatemacro{\label}{(3-\k)*22+(\j-1)*5+1}
				\draw[thick] (\j1\k) to node[draw=none,below=5pt,right=-3pt,sloped] {\label} 	(\j2\k);
				\pgfmathtruncatemacro{\label}{(3-\k)*22+(\j-1)*5+2}
				\draw[thick] (\j2\k) to node[draw=none,above=5pt,left=-3pt,sloped] {\label} 	(\j3\k);
			}
		 }
	\end{tikzpicture}
	\caption{A $Q_3$-magic edge labeling with magic sum $594$ for $\textsc{Grid}(5,3,3)$.}\label{fig:533grid_edge}
\end{figure}
\end{example}

\begin{lemma}\label{lem:edge_induction}
 The function $g$ defined by~(\ref{eq:ConnectingEdges}) to~(\ref{eq:EdgeEvenCase2}) is a $Q_d$-magic edge labeling with $Q_d$-magic sum
\[c'(g)=S+2S'+2^{d-2}(n_d-2)N+2^{d-2}\left(2n_d+(d-1)(n_d-1)\right)M.\]
\end{lemma}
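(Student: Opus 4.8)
The plan is to follow the template of the proof of Lemma~\ref{lem:vertex_induction}. First I would check that $g$ is a bijection onto $\{1,\ldots,\lvert E\rvert\}$, and then, for an arbitrary subgraph $H\cong Q_d$ of $G=\textsc{Grid}(n_1,\ldots,n_d)$, I would compute $\sum_{e\in E(H)}g(e)$ by splitting the $d\cdot 2^{d-1}$ edges of $H$ according to their direction. Throughout, I use that $E$ decomposes into the $(n_d-1)N$ ``connecting'' edges handled by~\eqref{eq:ConnectingEdges} together with the $n_dM$ ``horizontal'' edges, $M$ of them in each of the $n_d$ copies of $\textsc{Grid}(n_1,\ldots,n_{d-1})$, so that $\lvert E\rvert=n_dM+(n_d-1)N$.

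For the bijectivity, the key observation is that every defining formula has the shape ``base value $+$ offset $\times$ block size'', where the base value is a value of $\tilde f$ (block size $N$) in~\eqref{eq:ConnectingEdges} or of $\tilde g$ (block size $M$) in~\eqref{eq:EdgeOddCase}--\eqref{eq:EdgeEvenCase2}, and hence lies in $[N]$ or $[M]$ since $\tilde f$ and $\tilde g$ are labelings of $\textsc{Grid}(n_1,\ldots,n_{d-1})$. Fixing a ``column'' $(x_1,\ldots,x_{d-1})$ in~\eqref{eq:ConnectingEdges} and letting $x_d$ range over $[n_d-1]$, the offset ($x_d-1$ or $n_d-1-x_d$, depending only on the now-fixed parity of $x_1+\cdots+x_{d-1}$) runs through $\{0,1,\ldots,n_d-2\}$; since $\tilde f$ is a bijection onto $[N]$, the connecting edges receive exactly the labels in $n_dM+[1,(n_d-1)N]$. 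Likewise, for a fixed edge $\tilde e$ of $\textsc{Grid}(n_1,\ldots,n_{d-1})$ the branch selected in~\eqref{eq:EdgeOddCase},~\eqref{eq:EdgeEvenCase1} or~\eqref{eq:EdgeEvenCase2} is determined by $\tilde e$ alone (through its direction, resp.\ through $\sum_{i\le d-2}x_i$) and does not vary with $x_d$, and as $x_d$ ranges over $[n_d]$ the offset runs through $\{0,1,\ldots,n_d-1\}$; since $\tilde g$ is a bijection onto $[M]$, the horizontal edges receive exactly the labels in $[1,n_dM]$. As these two intervals partition $[1,\lvert E\rvert]$, $g$ is a bijection.

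For the magic sum, fix $(x_1,\ldots,x_d)\in[n_1-1]\times\cdots\times[n_d-1]$ and let $H\cong Q_d$ be the subgraph on the vertices $(x_1+\varepsilon_1,\ldots,x_d+\varepsilon_d)$, $(\varepsilon_1,\ldots,\varepsilon_d)\in\{0,1\}^d$, and sum $g$ over $E(H)$ direction by direction. The $2^{d-1}$ edges of $H$ in direction $d$ have lower endpoints $(x_1+\varepsilon_1,\ldots,x_{d-1}+\varepsilon_{d-1},x_d)$; summing~\eqref{eq:ConnectingEdges}, using that exactly half of these lower endpoints have an even sum of their first $d-1$ coordinates together with the $Q_{d-1}$-magic property of $\tilde f$ on the corresponding $(d-1)$-face, gives $S+2^{d-1}n_dM+2^{d-2}(n_d-2)N$ (the computation parallels~\eqref{eq:face}). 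For each direction $i\le d-1$, the $2^{d-1}$ edges of $H$ in direction $i$ project two-to-one (the two preimages differ in $\varepsilon_d$) onto the $2^{d-2}$ direction-$i$ edges of the face $\tilde H\cong Q_{d-1}$ based at $(x_1,\ldots,x_{d-1})$, so the $\tilde g$-parts of their labels sum to twice the $\tilde g$-sum over those face edges; summing over $i\in[d-1]$ and applying the $Q_{d-1}$-magic property of $\tilde g$ turns this into $2S'$. What remains are the $M$-multiples, and this is where the two cases enter: when $d$ is odd a direction-$i$ group contributes $2^{d-2}(2x_d-1)M$ for $i$ odd and $2^{d-2}(2n_d-2x_d-1)M$ for $i$ even, and since $\{1,\ldots,d-1\}$ contains equally many odd and even indices these add up in pairs to $(d-1)2^{d-2}(n_d-1)M$; when $d$ is even the indices $1,\ldots,d-2$ still pair off, and the point of the modified rule~\eqref{eq:EdgeEvenCase2} is precisely that among the direction-$(d-1)$ edges of $H$ exactly half of the $2^{d-2}$ tuples $(\varepsilon_1,\ldots,\varepsilon_{d-2})$ make $\sum_{i\le d-2}(x_i+\varepsilon_i)$ odd, so that this single unpaired group by itself contributes $2^{d-3}\bigl((2x_d-1)+(2n_d-2x_d-1)\bigr)M=2^{d-2}(n_d-1)M$, exactly the amount needed to complete the total to $(d-1)2^{d-2}(n_d-1)M$ again. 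Adding the three contributions and rewriting $2^{d-1}n_d=2^{d-2}\cdot 2n_d$ yields $c'(g)=S+2S'+2^{d-2}(n_d-2)N+2^{d-2}\bigl(2n_d+(d-1)(n_d-1)\bigr)M$.

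I expect the only real difficulty to be bookkeeping: one must consistently keep in mind that the letter ``$x_d$'' in~\eqref{eq:EdgeOddCase}--\eqref{eq:EdgeEvenCase2} denotes the last coordinate of the \emph{lower} endpoint of the edge at hand, which for a direction-$i$ edge of $H$ with $i\le d-1$ is $x_d+\varepsilon_d$ rather than $x_d$, and one must run the parities for $d$ odd and $d$ even side by side, in particular explaining why~\eqref{eq:EdgeEvenCase2} has to differ from~\eqref{eq:EdgeOddCase} and verifying that it restores the balance that is automatic when $d$ is odd. Once the edges of $H$ are organised by direction and this is set up, everything else is elementary arithmetic.
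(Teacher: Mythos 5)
Your argument is correct and follows essentially the same route as the paper's proof: split $E(H)$ into the direction-$d$ (connecting) edges and the edges inside the two $(d-1)$-faces, use that half of the relevant coordinate sums (or half of the indices $i$) have each parity to evaluate the $N$- and $M$-offsets, and add the face contributions $2S'$ coming from the $Q_{d-1}$-magic property of $\tilde g$, arriving at the identical sum. The only difference is that you additionally verify explicitly that $g$ is a bijection onto $\{1,\ldots,\lvert E\rvert\}$ (via the decomposition into $[1,n_dM]$ for the face edges and $n_dM+[1,(n_d-1)N]$ for the connecting edges), a point the paper's proof leaves implicit.
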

Again, we illustrate the basic idea before going into the formal
proof. Figure~\ref{fig:subcube_533_edge} shows the edge labels for the same subgraph of
$\textsc{Grid}(5,3,3)$ as in the illustration for Lemma~\ref{lem:vertex_induction} (see
Figure~\ref{fig:subcube_533}).  
\begin{figure}[htb]
  \begin{minipage}[b]{.49\linewidth}
  \centering
  \begin{tikzpicture}[xscale=1.8,yscale=1.6,every node/.style={draw,shape=circle,outer sep=2pt,inner sep=1pt,minimum
		size=.2cm}]
  \node[fill=black,label={west:{\small$(2,1,2)$}}]  (000) at (0,0) {};
  \node[fill=black,label={west,yshift=.2cm:{\small$(3,1,2)$}}]  (001) at (2,0) {};
  \node[fill=black,label={west,yshift=.2cm:{\small$(3,1,1)$}}]  (011) at (2,2) {};
  \node[fill=black,label={west:{\small$(2,1,1)$}}]  (010) at (0,2) {};
  \node[fill=black,label={east,yshift=.2cm:{\small$(2,2,2)$}}]  (100) at (.8,.7) {};
  \node[fill=black,label={east:{\small$(3,2,2)$}}]  (101) at (2.8,.7) {};
  \node[fill=black,label={north,yshift=-.5cm:{\small$(3,2,1)$}}]  (111) at (2.8,2.7) {};
  \node[fill=black,label={north,yshift=-.5cm:{\small$(2,2,1)$}}]  (110) at (0.8,2.7) {};
  \draw[thick] (000) to node[draw=none,above=5pt,left=-3pt] {37} (001);
  \draw[thick] (000) to node[draw=none,above=5pt,left=-3pt] {78} (010);
  \draw[thick] (000) to node[draw=none,above=5pt,left=-3pt] {28} (100);
  \draw[thick] (001) to node[draw=none,above=5pt,left=-3pt] {88} (011);
  \draw[thick] (001) to node[draw=none,below] {33} (101);
  \draw[thick] (010) to node[draw=none,below,near end] {15} (011);
  \draw[thick] (010) to node[draw=none,above=5pt,left=-3pt] {50} (110);
  \draw[thick] (111) to node[draw=none,above] {14} (110);
  \draw[thick] (111) to node[draw=none,right] {74} (101);
  \draw[thick] (111) to node[draw=none,above=5pt,left=-3pt] {55} (011);
  \draw[thick] (100) to node[draw=none,above,very near end] {36} (101);
  \draw[thick] (100) to node[draw=none,left, near start] {86} (110);
  \end{tikzpicture}
  \caption{An edge-labeled $Q_3$-subgraph of $\textsc{Grid}(5,3,3)$.}\label{fig:subcube_533_edge}  
  \end{minipage}\hfill
  \begin{minipage}[b]{.49\linewidth}
\centering
\begin{tikzpicture}[scale=.6,every node/.style={draw,shape=circle,outer sep=2pt,inner sep=1pt,minimum
			size=.6cm}]
{\footnotesize
		\foreach \j in {1,...,3}{
			\pgfmathtruncatemacro{\jk}{\j*2-1}
			\foreach \k in {1,...,2}{
				\pgfmathtruncatemacro{\kj}{\k*2-1}
				\pgfmathtruncatemacro{\label}{\kj+(\jk-1)*3}
				\node  (\jk\kj) at (3*\jk-3,9-3*\kj) {\label};
			}
			\pgfmathtruncatemacro{\label}{(5-\jk)*3+2}
			\node	(\jk2)	at	(3*\jk-3,3)	{\label};
		}
		
		\foreach \j in {1,...,2}{
			\pgfmathtruncatemacro{\jk}{\j*2}
			\foreach \k in {1,...,2}{
				\pgfmathtruncatemacro{\kj}{\k*2-1}
				\pgfmathtruncatemacro{\label}{(5-\jk)*3+4-\kj}
				\node  (\jk\kj) at (3*\jk-3,9-3*\kj) {\label};
			}
			\pgfmathtruncatemacro{\label}{\jk*3-1}
			\node	(\jk2)	at	(3*\jk-3,3)	{\label};
		}
		
		\foreach \k in {1,...,2}{
			\pgfmathtruncatemacro{\next}{1+\k}
			\foreach \j in {1,...,5}{
				\pgfmathtruncatemacro{\label}{(\j-1)*5+\k}
				\draw[thick] (\j\k) to node[draw=none,right=-2pt] {\label} 	(\j\next);
			}
		}
		
		\foreach \k in {1,...,3}{
			\foreach \j in {1,...,4}{
				\pgfmathtruncatemacro{\jnext}{\j+1}
				\pgfmathtruncatemacro{\label}{(5-\j)*5+1-\k}
				\draw[thick] (\j\k) to node[draw=none,above=-4pt,sloped] {\label} 	(\jnext\k);
			}
		}
}
	\end{tikzpicture}
	\caption{$Q_2$-vertex magic and $Q_2$-edge magic labeling of $\textsc{Grid}(5,3)$.}\label{fig:5by3grid_alt}    
  \end{minipage}
\end{figure}
The top and bottom squares of the cube in Figure~\ref{fig:subcube_533_edge} correspond to the square
labeled 12--7--8--5 in Figure~\ref{fig:5by3grid_alt}. In this example we have $N=15$ and $M=22$, and according to~\eqref{eq:ConnectingEdges}, the
labels of the vertical edges are
\begin{align*}
  78 &= 12+3\times 22, & 88 &= 7+3\times 22+15 , & 74 &= 8+3\times 22, & 86 &= 5+3\times 22+15,
\end{align*}
so the sum of the labels of the vertical edges is $S+12\times 22 +2\times 15$, where $S=12+7+8+5$ is
the vertex-magic sum in Figure~\ref{fig:5by3grid_alt}. According to~\eqref{eq:EdgeOddCase}, the
labels of the edges in the top and the bottom square are
\begin{align*}
  15 &= 15, & 55 &= 11+2\times 22 , & 14 &= 14, & 50 &= 6+4\times 22,\\
  37 &= 15+22, & 33 &= 11+22 , & 36 &= 14+22, & 28 &= 6+22,
\end{align*}
and therefore the sum of these edge labels is $2(15+11+14+6+4\times 22)=2S'+8\times 22$, where $S'=15+11+14+6$ is
the edge-magic sum in Figure~\ref{fig:5by3grid_alt}. Adding all edge labels we obtain $S+2S'+2\times
15+20\times 22$ as claimed in the lemma.
\begin{proof}[Proof of Lemma~\ref{lem:edge_induction}] 
Fix a subgraph $H\subseteq\textsc{Grid}(n_1,\ldots,n_d)$ with $H\cong Q_d$. Its vertex set is
\[V(H)=\{(x_1+\varepsilon_1,\ldots,x_{d-1}+\varepsilon_{d-1},x_d+\varepsilon_d)\ :\ 
(\varepsilon_1,\ldots,\varepsilon_d)\in\{0,1\}^{d}\}\]
for some $(x_1,\ldots,x_{d})\in[n_1-1]\times\cdots\times[n_{d}-1]$. We partition the vertex set as
$V(H)=V_0(H)\cup V_1(H)$ and the edge set as $E(H)=E_0(H)\cup E_1(H)\cup E_2(H)$ where
    \begin{align*}
     V_\varepsilon(H) &= \left\{\vect y\in V(H)\ :\ y_d=x_d+\varepsilon\right\}&&\text{for }\varepsilon\in\{0,1\},\\
     E_\varepsilon(H) &= \left\{\{\vect y,\vect z\}\in E(H)\ :\ y_d=z_d=x_d+\varepsilon\right\}&&\text{for }\varepsilon\in\{0,1\},\\
     E_2(H)&=  \left\{\{\vect y,\vect z\}\in E(H)\ :\ y_d=x_d,\,z_d=x_d+1\right\}.
    \end{align*}
    Note that $\lvert V_0(H)\rvert=\lvert V_1(H)\rvert=\lvert E_2(H)\rvert=\lvert
    V(Q_{d-1})\rvert=2^{d-1}$ and $\lvert E_0(H)\rvert=\lvert E_1(H)\rvert=\lvert
    E(Q_{d-1})\rvert=(d-1)2^{d-2}$. Using the fact that $y_1+\cdots+y_{d-1}$ is even for exactly
    half of the edges $\{\vect y,\,\vect z\}\in E_2(H)$ and that the subgraph of $G$ induced by
    $V_0(H)$ is isomorphic to $Q_{d-1}$, we obtain from~(\ref{eq:ConnectingEdges}),
\begin{multline}\label{eq:connectingedgesproof}
  \sum_{e\in E_2(H)}g(e)=\sum_{\vect y\in V_0(H)}\tilde
  f(y_1,\ldots,y_{d-1})+2^{d-1}n_dM+2^{d-2}(x_d-1)N+2^{d-2}(n_d-1-x_d)N \\
=S +2^{d-1}n_dM+2^{d-2}(n_d-2)N. 
\end{multline}

For the edges in $E_0(H)\cup E_1(H)$ we use the fact that each of the sets $V_\varepsilon(H)$
induces a subgraph isomorphic to $Q_{d-1}$. In addition, if $d$ is odd then exactly half of the
indices $i\in\{1,\ldots,d-1\}$ are even, and if $d$ is even then
\begin{itemize}
\item exactly half of the indices $i\in\{1,\ldots,d-2\}$ are even, and
\item for exactly half of the vertices $(y_1,\ldots,y_{d-2},x_{d-1},x_d+\varepsilon)$ the sum
  $y_1+\cdots y_{d-2}$ is even.
\end{itemize}
In both cases we conclude that for exactly half of the edges in $E_\varepsilon(H)$ the term added to
$\tilde g(\{(x_1,\ldots,x_{d-1}),(y_1,\ldots,y_{d-1})\})$ in~(\ref{eq:ConnectingEdges})
to~(\ref{eq:EdgeEvenCase2}) is $(x_d-1)M$, and for the other half it is $(n_d-x_d)M$. This implies
\begin{multline}\label{eq:horizontaledgesproof}
  \sum_{\{x,y\}\in E_\varepsilon(H)}g(\{x,y\})=\sum_{\{x,y\}\in E_\varepsilon(H)}\tilde
  g(\{(x_1,\ldots,x_{d-1}),(y_1,\ldots,y_{d-1})\})\\
  +(d-1)2^{d-3}(x_d-1)M+(d-1)2^{d-3}(n_d-x_d)M\\ = S'+(d-1)2^{d-3}(n_d-1)M.
\end{multline}
Combining~(\ref{eq:connectingedgesproof}) and~(\ref{eq:horizontaledgesproof}), the function $g$ is a
$Q_d$-magic labeling with $Q_d$-magic sum
\begin{multline*}
c'(g)=S+2^{d-1}n_dM+2\left(S'+(d-1)2^{d-3}(n_d-1)M\right)\\
=S+2S'+2^{d-2}(n_d-2)N+2^{d-2}\left(2n_d+(d-1)(n_d-1)\right)M.\qedhere	  
\end{multline*}
\end{proof}


\begin{thebibliography}{1}

\bibitem{Enomoto.etal_1998_Superedgemagic}
H.~Enomoto, A.~S. Llad{\'o}, T.~Nakamigawa, and G.~Ringel.
\newblock Super edge-magic graphs.
\newblock {\em SUT J. Math}, 34(2):105--109, 1998.

\bibitem{Gallian_2009_dynamicsurveygraph}
J.~A. Gallian.
\newblock A dynamic survey of graph labeling.
\newblock {\em The electronic journal of combinatorics}, DS6(18th
  edition):1--389, 2015.

\bibitem{Gutierrez.Llado_2005_Magiccoverings}
A.~Guti{\'e}rrez and A.~Llad{\'o}.
\newblock Magic coverings.
\newblock {\em Journal of Combinatorial Mathematics and Combinatorial
  Computing}, 55:43--56, 2005.

\bibitem{Kojima_2013_supermagiclabelingsCartesian}
T.~Kojima.
\newblock On -supermagic labelings of the cartesian product of paths and
  graphs.
\newblock {\em Discrete Mathematics}, 313(2):164--173, 2013.

\bibitem{Llado.Moragas_2007_Cyclemagicgraphs}
A.~Llad{\'{o}} and J.~Moragas.
\newblock Cycle-magic graphs.
\newblock {\em Discrete Mathematics}, 307(23):2925--2933, 2007.

\bibitem{Maryati.etal_2008_Phsupermagiclabelings}
T.~Maryati, E.~Baskoro, and A.~Salman.
\newblock ${P}_h$-supermagic labelings of some trees.
\newblock {\em Journal of Combinatorial Mathematics and Combinatorial
  Computing}, 65:197--204, 2008.

\bibitem{Ngurah.etal_2010_H-supermagic}
A.~Ngurah, A.~Salman, and L.~Susilowati.
\newblock {$H$}-supermagic labelings of graphs.
\newblock {\em Discrete Mathematics}, 310(8):1293--1300, 2010.

\bibitem{Rosa_1966_certainvaluationsvertices}
A.~Rosa.
\newblock On certain valuations of the vertices of a graph.
\newblock In {\em Theory of Graphs (Internat. Symposium), Rome}, pages
  349--355, 1966.

\bibitem{Wijaya2016}
R.~W.~N. Wijaya, J.~Ryan, and T.~Kalinowski.
\newblock ${H}$-supermagic labelings for firecrackers, banana trees and
  flowers.
\newblock {\em arxiv:\href{http://arxiv.org/abs/1607.07911}{1607.07911}}, 2016.

\end{thebibliography}

\end{document}